\numberwithin{equation}{section}
\theoremstyle{plain}
\newtheorem{thm}{Theorem}
\newtheorem{prop}{Proposition}[section]
\newtheorem{lem}[prop]{Lemma}
\newtheorem{claim}[prop]{Claim}
\theoremstyle{definition}
\newtheorem{dfn}[prop]{Definition}
\theoremstyle{remark}
\newtheorem*{rem*}{Remark}
\newtheorem*{rems*}{Remarks}
\newcommand{\eq}[1]{ \begin{equation}\begin{split}
			#1 \end{split}
\end{equation} }
\newcommand{\als}[1]{\begin{align*} #1 \end{align*} }
\newcommand{\N}{\mathbb{N}}
\newcommand{\Z}{\mathbb{Z}}
\newcommand{\R}{\mathbb{R}}
\newcommand{\C}{\mathbb{C}}
\newcommand{\E}{\mathbb{E}}
\renewcommand{\P}{\mathbb{P}}
\newcommand{\CA}{\mathcal{A}}
\newcommand{\CB}{\mathcal{B}}
\newcommand{\CC}{\mathcal{C}}
\newcommand{\ds}{\displaystyle}
\newcommand{\bv}\boldsymbol{}
\newcommand{\eps}\varepsilon
\renewcommand{\le}{\leqslant}
\renewcommand{\geq}{\geqslant}
\renewcommand{\ge}{\geqslant}
\renewcommand{\mod}[1]{\,(\mathrm{mod}\,#1)}
\newcommand{\bggg}{\bigg}
\begin{document}

\title{On the $j$-th smallest modulus of a covering system with distinct moduli}

\author{Jonah Klein}
\address{D\'epartement de math\'ematiques et de statistique\\
	Universit\'e de Montr\'eal\\
	CP 6128 succ. Centre-Ville\\
	Montr\'eal, QC H3C 3J7\\
	Canada}
\email{{\tt jonah.klein@umontreal.ca}}

\author{Dimitris Koukoulopoulos}
\address{D\'epartement de math\'ematiques et de statistique\\
	Universit\'e de Montr\'eal\\
	CP 6128 succ. Centre-Ville\\
	Montr\'eal, QC H3C 3J7\\
	Canada}
\email{{\tt dimitris.koukoulopoulos@umontreal.ca}}

\author{Simon Lemieux}
\address{D\'epartement de math\'ematiques et de statistique\\
	Universit\'e de Montr\'eal\\
	CP 6128 succ. Centre-Ville\\
	Montr\'eal, QC H3C 3J7\\
	Canada}
\email{{\tt simon.lemieux.6@umontreal.ca}}

%
\date{\today}

\maketitle
\begin{abstract}
Covering systems were introduced by Erd\H{o}s in 1950. In the same article where he introduced them, he asked if the minimum modulus of a covering system with distinct moduli is bounded. In 2015, Hough answered affirmatively this long standing question.  In 2022, Balister, Bollob\'as, Morris, Sahasrabudhe and Tiba gave a simpler and more versatile proof of Hough's result. Building upon their work, we show that there exists some absolute constant $c>0$ such that the $j$-th smallest modulus of a minimal covering system with distinct moduli is $\le \exp(cj^2/\log(j+1))$.
\end{abstract}
\section{Introduction}

A {\it covering system} is a finite set of arithmetic progressions whose union is $\Z$. We say a covering system is {\it minimal} if there exists no proper subset of its arithmetic progressions that also covers $\Z$.

Covering systems were first introduced by Paul Erd\H os in a seminar 1950 article \cite{ErdosIntro}, where he used them to give an answer to a question of Romanoff on integers of the form $2^k+p$ with $p$ a prime and $k$ an integer. More precisely, Erd\H os proved that there exists an arithmetic progression that contains no integers of the above form.

Throughout his life, Erd\H os posed many questions on the properties of covering systems. One of the most famous ones comes from his 1950 article: consider all covering systems with distinct moduli. Is there a uniform bound on the smallest modulus of all these systems? This problem remained open for 65 years, until it was recently solved by Hough \cite{Hough}. In fact, Hough proved that every such system has minimum modulus at most $10^{16}$. In 2022, Balister, Bollob\'as, Morris, Sahasrabudhe and Tiba \cite{Balister} reduced this bound to 616,000. Their method, which they named {\it the distortion method}, is rather versatile -- see \cite{Balister4} for an expository note on it and \cite{Balister2} for another application of it to the so-called {\it Erd\H os--Selfridge problem}.  Using the distortion method, Cummings, Filaseta and Trifonov~\cite{CFT} improved Hough's bound to 118 under the additional assumption that all moduli are square-free. In the present paper, we use this method to demonstrate the following generalization of Hough's theorem.

\begin{thm}\label{thm1}
	There exists an absolute constant $c>0$ such that the $j$-th smallest modulus in a minimal covering system with distinct moduli is $\le \exp(cj^2/\log(j+1))$.
\end{thm}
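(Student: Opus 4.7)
We adapt the distortion method of Balister--Bollob\'as--Morris--Sahasrabudhe--Tiba (BBMST) so as to handle the first $j-1$ arithmetic progressions separately from the rest. Suppose for contradiction that $\CC=\{a_i+d_i\Z\}_{i\ge 1}$ is a minimal covering system with distinct moduli $d_1<d_2<\cdots$ and that $d_j>T:=\exp(cj^2/\log(j+1))$, for an absolute constant $c>0$ to be chosen. Let $N$ be the least common multiple of all moduli, and set $U:=\bigcup_{i<j}(a_i+d_i\Z)\bmod N$. Since $\CC$ is a covering system, every residue class outside $U$ must be covered by some AP in the tail $\CC_{\ge j}:=\{a_i+d_i\Z\}_{i\ge j}$. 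The strategy is to construct a positive measure $\mu$ on $\Z/N\Z$ supported on integers uncovered by $\CC_{\ge j}$, then rule out that $\mu$ is supported on $U$.

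\textbf{Key steps.} Following BBMST, we iterate through primes $p$ in increasing order. Starting from the uniform measure on $\Z/N\Z$, at each prime $p$ we zero out the mass on the residues covered by those APs in $\CC_{\ge j}$ whose modulus has $p$ as smallest prime factor, and apply a renormalization designed to preserve approximate uniformity modulo smaller moduli. The BBMST distortion lemmas then produce (i) a lower bound $\mu(\Z/N\Z)\ge \delta(T)$ depending only on the threshold $T$ on the minimum modulus in $\CC_{\ge j}$, and (ii) a near-uniformity estimate $\mu(a_i+d_i\Z)\le (1+\varepsilon_i)\mu(\Z/N\Z)/d_i$ for each $i<j$, with $\varepsilon_i$ controlled by the primes dividing $d_i$. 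From $\operatorname{supp}(\mu)\subseteq U$ one then aims to derive a contradiction through a suitable inequality relating $\mu(\Z/N\Z)$ to the masses $\mu(a_i+d_i\Z)$, optimizing the parameters against the hypothesis $d_j>T$.

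\textbf{Main obstacle.} The core technical difficulty is that the naive subadditive bound $\mu(U)\le \sum_{i<j}\mu(a_i+d_i\Z)$ is useless once $\sum_{i<j}1/d_i\ge 1$, which occurs already for $j\ge 3$ by the harmonic sum. One must instead exploit that the BBMST measure $\mu$ is close to a product measure in the sense that
\[
\frac{\mu(U)}{\mu(\Z/N\Z)}\le 1-\prod_{i<j}(1-1/d_i)+\text{(distortion error)},
\]
so that incompatibility of $\mu(U)=\mu(\Z/N\Z)$ with the covering forces
$\prod_{i<j}(1-1/d_i)\le \text{error}$. Since $d_i\ge i$ implies $\prod_{i<j}(1-1/d_i)\ge 1/j$, the distortion error must beat $1/j$. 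Balancing this error---which accumulates with the primes one has to process and hence grows with $\log T$---against the gain $1/j$ is what ultimately fixes the threshold $T\le \exp(cj^2/\log(j+1))$: the BBMST loss of mass per ``level'' scales roughly as $1/\log T$, and accommodating the $j-1$ small moduli forces $j$ such levels, yielding $j/\log T\lesssim 1/j$ and hence $\log T\gtrsim j^2/\log j$. Propagating this analysis through the prime-by-prime construction of $\mu$ is, I expect, where the bulk of the technical work lies.
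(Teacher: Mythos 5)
There is a genuine gap, and it sits exactly where you say ``the bulk of the technical work lies.'' The inequality $\mu(U)/\mu(\Z/N\Z)\le 1-\prod_{i<j}(1-1/d_i)+\text{(error)}$ is not something the distortion method provides. The BBMST machinery (Lemma 3.4 of \cite{Balister}, used here as \eqref{eq:AP bound}) gives only \emph{upper} bounds of the form $\P_J(a+m\Z)\le \prod_{p_i\mid m}(1-\delta_i)^{-1}/m$ for individual progressions; your claim requires a \emph{lower} bound on the mass of the intersection of the complements $\bigcap_{i<j}(\Z\setminus(a_i+d_i\Z))$, i.e.\ an anti-concentration statement saying the sieved measure cannot pile up on the union $U$ of the small-modulus progressions. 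Nothing in the construction rules this out a priori: after zeroing out the residues hit by the tail, the surviving mass could in principle live almost entirely inside $U$, and excluding that is essentially the original covering problem again. The natural fixes --- feeding the $j-1$ small progressions into the sieve, or conditioning the initial measure on the complement of $U$ (a union of classes mod $\lcm(d_1,\dots,d_{j-1})$) --- destroy the moment bounds precisely because the moduli $d_i$, $i<j$, are small, which is why a new idea is needed. The paper's new idea is different from anything in your sketch: by minimality and the Crittenden--Vanden Eynden theorem, for every $n$ some $n+h$ with $0\le h<2^{j-1}$ escapes the first $j-1$ progressions, so the shifted system $\{r_i-h \bmod q_i: i\ge j,\ 0\le h<2^{j-1}\}$ is again a covering system; it has multiplicity $2^{j-1}$ and minimum modulus $q_j$. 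This removes the small moduli entirely, and one then proves a multiplicity-$s$ version of the distortion theorem (minimum modulus $\le \exp(c\log^2(s+1)/\log\log(s+2))$), applied with $s=2^{j-1}$.

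There is also a quantitative inconsistency in your accounting: from ``error $\approx j/\log T$ must beat $1/j$'' you get $\log T\gtrsim j^2$, not $j^2/\log j$, so even granting the unproven product-measure step your sketch would only yield $q_j\le \exp(Cj^2)$, weaker than the theorem. In the paper the $j^2/\log(j+1)$ exponent has a specific source: with multiplicity $s=2^{j-1}$ one takes the smooth-cutoff $y\asymp s^3$, and the first-moment sum $s\sum_{d>x,\,P^+(d)\le y}1/d$ is beaten using the $u^{-u}$ decay of smooth numbers, forcing only $u\gtrsim \log s/\log\log s\asymp j/\log j$ and hence $\log x\asymp u\log y\asymp j^2/\log j$. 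Your heuristic ``loss of mass $1/\log T$ per level, $j$ levels'' does not correspond to an estimate in the method and does not reproduce this.
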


Except for the distortion method, the other key input to the proof of Theorem \ref{thm1} is the following result of Crittenden and Vanden Eynden~\cite{Crittenden}, originally conjectured by Erd\H os~\cite{Erdosn}: if a set of $n$ arithmetic progressions does not cover $\Z$, then it does not cover the interval $\{1,2,\dots,2^n\}$. Interestingly, a shorter proof of this theorem was recently discovered by Balister, Bollob\'as, Morris, Sahasrabudhe and Tiba \cite{Balister2}.

\begin{rem*} Cummings, Filaseta and Trifonov~\cite{CFT} have obtained a weaker version of Theorem \ref{thm1} independently: they show that the $j$-th smallest modulus  in a minimal covering system with distinct moduli is $\le C_j$ for some unspecified constant $C_j$. Their proof is rather different, showing the existence of $C_j$ in an inductive fashion, as a corollary of Hough's theorem. 
\end{rem*}

We conclude this introductory section with the following result that complements Theorem \ref{thm1}.

\begin{thm}\label{thm:lb} For each $j \geq 5$, there exists a minimal covering system with the following $j$ distinct moduli placed in increasing order:
	\[
	2<2^2<2^3<\dots<2^{j-4}<3\cdot 2^{j-5}<2^{j-3}<3\cdot 2^{j-4}<3\cdot 2^{j-3}.
	\]
\end{thm}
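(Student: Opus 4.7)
The plan is to exhibit an explicit covering system with the prescribed moduli, then verify minimality by producing, for each progression, an integer covered by no other. Set $M = 2^{j-4}$ and $N = 3 \cdot 2^{j-3} = 6M$, the least common multiple of all moduli in the list. I will use the progressions
\begin{align*}
A_k &= \{x \in \Z : x \equiv 2^{k-1} - 1 \pmod{2^k}\}, \quad k = 1, \ldots, j - 4, \\
B_1 &= \{x : x \equiv -1 \pmod{3M/2}\}, \\
B_2 &= \{x : x \equiv M - 1 \pmod{2M}\}, \\
B_3 &= \{x : x \equiv 2M - 1 \pmod{3M}\}, \\
B_4 &= \{x : x \equiv 4M - 1 \pmod{6M}\},
\end{align*}
whose moduli are precisely the $j$ numbers listed in the theorem. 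For $j = 5$ this specializes to Erd\H{o}s's classical covering with moduli $\{2, 3, 4, 6, 12\}$.

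The first step is to verify coverage. A straightforward induction on $k$ shows that $A_1 \cup \cdots \cup A_{j-4} = \Z \setminus (-1 + M\Z)$, so it remains only to cover the six residue classes $tM - 1 \pmod N$ with $t \in \{0, 1, \ldots, 5\}$. Reducing each $B_i$ modulo $N$ directly shows that $B_1$ hits $t \in \{0, 2\}$, $B_2$ hits $t \in \{1, 3, 5\}$, $B_3$ hits $t \in \{2, 5\}$, and $B_4$ hits $t = 4$. The union is all six classes.

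The second step is minimality. I will claim that $2^{k-1} - 1$ is uniquely covered by $A_k$, while $-1$, $M-1$, $2M-1$, and $4M-1$ are uniquely covered by $B_1$, $B_2$, $B_3$, and $B_4$ respectively. Each such claim reduces to a short congruence check: the binary expansion of $2^{k-1}-1$ has bit $k-1$ equal to $0$ and lower bits equal to $1$, which prevents it from lying in any $A_{k'}$ with $k' \neq k$, and the bound $2^{k-1} - 1 \le M/2 - 1$ places it below the smallest positive representative of each $B_i$; similarly, each of $-1, M-1, 2M-1, 4M-1$ is congruent to $-1 \pmod M$ and hence lies outside every $A_k$, while pairwise comparisons modulo $3M/2$, $2M$, $3M$, and $6M$ confirm that each witness lies in exactly one $B_i$.

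I do not anticipate a genuine obstacle: the construction is uniform in $j$ and the required verifications reduce to finitely many elementary congruence checks independent of $j$. The subtlest point is that $B_1$ and $A_{j-4}$ share two residue classes modulo $N$ (namely $3M/2 - 1$ and $9M/2 - 1$), so one must verify that the chosen witnesses for $A_{j-4}$ and $B_1$ avoid this overlap; they do, since $2^{j-5} - 1 = M/2 - 1$ differs from $3M/2 - 1$ and $9M/2 - 1$ modulo $N$, and so does $-1$.
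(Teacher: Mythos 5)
Your construction is correct, but it is a genuinely different explicit system from the paper's. The paper runs the binary chain one step further, using $2^{i-1}\bmod 2^i$ for $i=1,\dots,j-3$ (so both $2^{j-4}$ and $2^{j-3}$ appear as chain moduli), which covers exactly the integers not divisible by $2^{j-3}$; the remaining class $0\bmod 2^{j-3}$ is then split by residues mod $3$ into three pairwise disjoint progressions with moduli $3\cdot 2^{j-5}$, $3\cdot 2^{j-4}$, $3\cdot 2^{j-3}$. Because each of the two groups is internally disjoint, minimality there is almost immediate. You instead stop the chain at $2^{j-4}$, leaving the single class $-1\bmod M$ (with $M=2^{j-4}$), and cover its six subclasses $tM-1\bmod 6M$ by four mutually overlapping progressions $B_1,\dots,B_4$ of moduli $3M/2,\,2M,\,3M,\,6M$; the price is that minimality of the $B_i$ requires exhibiting explicit witnesses, which you do correctly ($-1$, $M-1$, $2M-1$, $4M-1$ are each hit by exactly one $B_i$ and by no $A_k$ since they are $\equiv -1\bmod M$, and $2^{k-1}-1$ is hit only by $A_k$ since it is smaller than the least nonnegative representative of every $B_i$). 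One arithmetic slip to fix: $B_1=\{x\equiv -1 \bmod 3M/2\}$ meets the classes $tM-1\bmod 6M$ for $t\in\{0,3\}$, not $t\in\{0,2\}$ (its classes mod $6M$ are $-1$, $3M/2-1$, $3M-1$, $9M/2-1$, and only the first and third have the form $tM-1$). This does not affect the result: the union $\{0,3\}\cup\{1,3,5\}\cup\{2,5\}\cup\{4\}$ is still all six classes, and each $B_i$ still owns a class hit by it alone ($t=0,1,2,4$ respectively), so both coverage and minimality survive. Net comparison: the paper's decomposition buys a shorter minimality argument; yours shows the same moduli can be realized by a more interlocking system at the cost of a finite table of congruence checks.
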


\begin{proof} 
Fix $j\geq 5$, and let
\[
\CC_j=\{1 \mod 2, 2 \mod 4, \dots, 2^{j-4} \mod{2^{j-3}}, A_0, A_1, A_2\},
\]
where $A_k$ is the intersection of the congruence classes $k\mod 3$ and $0\mod{2^{j-5+k}}$.  We claim that $\CC_j$ is a covering system. 

Indeed, for each integer $n$, either $2^{j-3}|n$ or $2^{j-3}\nmid n$. In the former case, let $k\in\{0,1,2\}$ be such that $n\equiv k\mod 3$. In particular, we have $n\in A_k$, and thus $n$ is covered by $\CC_j$. Let us now consider the case when $2^{j-3}\nmid n$. Then, there must exist some $i\in\{1,2\dots,j-3\}$ such that $2^{i-1}|n$ and $2^i\nmid n$. Hence, $n\equiv 2^{i-1}\mod {2^i}$, so that $n$ is covered again by $\CC_j$. 

We have thus proven that $\CC_j$ is a covering system. Clearly, its list of moduli is the one prescribed in the statement of the theorem. Lastly, $\CC_j$ is a minimal covering system: the arithmetic progressions $2^{i-1}\mod{2^i}$ with $i=1,\dots,j-3$ are disjoint and cover exactly the integers not divisible by $2^{j-3}$. On the other hand, the arithmetic progressions $A_0,A_1,A_2$ are disjoint and are all needed to cover the integers in $0\mod{2^{j-3}}$.  This completes the proof of the theorem.
\end{proof}

\section{Outline of the proof of Theorem \ref{thm1}}

Let 
\[
\CC=\{r_1\mod{q_1},r_2\mod{q_2},\dots,r_k\mod{q_k}\}
\] 
be a minimal covering system with $q_1<q_2<\cdots<q_k$, and let $\ell \in\{1,2,\dots,k\}$. In particular, the first $\ell-1$ congruence classes do not cover $\Z$. Now, consider the system
\eq{\label{C_ell}
\CC_\ell=  \big\{r_j-h \mod{q_j} : \ell\le j\le k,\ 0\le h<2^{\ell-1} \big\} ,
}
so that $\CC_1=\CC$. 

\begin{claim}\label{claim1}
	The set $\CC_\ell$ is a covering system for $\ell=1,2,\dots,k$.
\end{claim}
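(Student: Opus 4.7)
The case $\ell=1$ is immediate, since $\CC_1=\CC$ is assumed to be a covering system. For $\ell\in\{2,\ldots,k\}$, the plan is to show directly that every integer $n$ lies in some class of $\CC_\ell$. Unwinding definitions, $n\in r_j-h\mod{q_j}$ is equivalent to $n+h\equiv r_j\mod{q_j}$; so the task becomes to exhibit $j\geq\ell$ and $h\in\{0,1,\ldots,2^{\ell-1}-1\}$ such that the integer $n+h$ lies in the class $r_j\mod{q_j}$. In other words, it suffices to show that at least one element of the block
\[
\{n,n+1,\ldots,n+2^{\ell-1}-1\}
\]
is covered by one of the last $k-\ell+1$ classes of $\CC$.

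The key input is the Crittenden--Vanden Eynden theorem cited in the introduction: any set of $m$ arithmetic progressions that fails to cover $\Z$ also fails to cover some block of $2^m$ consecutive integers, and by translation invariance it therefore fails to cover \emph{every} block of $2^m$ consecutive integers. I would apply this to the first $\ell-1$ congruence classes of $\CC$, which form a proper subset of $\CC$ (as $\ell-1<k$) and hence, by the minimality of $\CC$, do not cover $\Z$. Consequently, the block $\{n,n+1,\ldots,n+2^{\ell-1}-1\}$ contains some integer $n+h$, with $0\leq h<2^{\ell-1}$, that is \emph{not} covered by any of $r_1\mod{q_1},\ldots,r_{\ell-1}\mod{q_{\ell-1}}$.

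Since $\CC$ itself covers $\Z$, this integer $n+h$ must be covered by some $r_j\mod{q_j}$ with $j\geq\ell$. Translating back, $n\equiv r_j-h\mod{q_j}$, so $n$ belongs to a class of $\CC_\ell$, as required.

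There is no real obstacle here beyond correctly identifying the minimal-covering input (to guarantee the first $\ell-1$ progressions do not cover $\Z$) and correctly invoking the Crittenden--Vanden Eynden theorem with the translation-invariance observation. The choice of the parameter $2^{\ell-1}$ in the definition of $\CC_\ell$ is precisely tuned so that the Crittenden--Vanden Eynden bound $2^{\ell-1}$ for $\ell-1$ progressions matches the length of the block we need to control.
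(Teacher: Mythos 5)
Your proposal is correct and follows essentially the same argument as the paper: use minimality to see that the first $\ell-1$ progressions do not cover $\Z$, invoke Crittenden--Vanden Eynden (in its any-interval form, which your translation remark correctly justifies) to find $h\in\{0,\ldots,2^{\ell-1}-1\}$ with $n+h$ uncovered by those progressions, and then use that $\CC$ covers $\Z$ to place $n+h$ in some class $r_j\mod{q_j}$ with $j\ge\ell$, i.e.\ $n\in q_j\Z+r_j-h$.
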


\begin{proof}
Recall that  Crittenden and Vanden Eynden \cite{Crittenden} proved that if $\ell$ progressions cover an interval $I \subset \Z$ of length $\geq 2^\ell$, then they cover $\Z$. Hence, for any fixed $n\in\Z$, we must have 
\[
n+ \{0, \ldots, 2^{\ell-1}-1\} \not\subset \bigcup_{i=1}^{\ell-1}  \big(q_i\Z+r_i\big) \,;
\]
otherwise the progressions on the right-hand side would cover $\Z$, which would contradict the minimality of the covering system $\CC$. Therefore, there is some $h \in \{0,\ldots,2^{\ell-1}-1\}$ such that $n+h \notin q_i\Z+r_i$ for each $i\in\{0,1,\dots,\ell-1\}$. Since $\CC$ is a covering system, there must exist some $j\ge\ell$ such that $n+h$ is covered by $q_j\Z+r_j$. We conclude that $n \in q_j\Z +r_j-h$, and so $n$ is covered by a progression in $C_\ell$.  
\end{proof}

The above claim motivates the following definition:

\begin{dfn}
Let $\CA=\{a_1\mod{d_1},a_2\mod{d_2},\dots,a_n\mod{d_n}\}$ be a set of congruences. We define the {\it multiplicity} of $\CA$ to be the number
\[
m(\CA):= \max_{d\in\N} \#\{1\le j\le n: d_j=d\} .
\]
\end{dfn}

\begin{rem*}
The moduli of a system of congruences $\CA$ are distinct if and only if $m(\CA)=1$. 
\end{rem*}

With this definition in mind, we have that $m(\CC_\ell)=2^{\ell-1}$ for the system of congruences defined in \eqref{C_ell}. Hence, Theorem \ref{thm1} is an immediate corollary of the following result. 

\begin{thm}\label{thm2} Let $\CA$ be a covering system of multiplicity $s$. Then there exists an absolute constant $c>0$ such that its smallest modulus is $\le \exp(c \log^2(s+1)/\log\log(s+2))$. 
\end{thm}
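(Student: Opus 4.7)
The plan is to adapt the distortion method of Balister, Bollob\'as, Morris, Sahasrabudhe and Tiba to the multiplicity-$s$ setting. Suppose, toward a contradiction, that $\CA$ is a covering system with $m(\CA) \leq s$ whose smallest modulus $q_0$ exceeds $\exp(C\log^2(s+1)/\log\log(s+2))$ for a sufficiently large constant $C$; the goal is to exhibit an integer that $\CA$ fails to cover. Fix a smoothness parameter $y = y(s)$ to be optimized later, let $Q = \prod_{p \leq y} p^{\alpha_p}$ with exponents $\alpha_p$ large enough that every $y$-smooth modulus in $\CA$ divides $Q$, and partition $\CA = \CA_{\mathrm{sm}} \sqcup \CA_{\mathrm{rg}}$ according to whether the modulus is $y$-smooth or has a prime factor exceeding $y$.

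Starting from the uniform probability measure $\mu_0$ on $\Z/Q\Z$, I process the moduli of $\CA_{\mathrm{sm}}$ one at a time in increasing order: at step $i$, let $d_i$ be the $i$-th smooth modulus and let $\mu_i$ be obtained from $\mu_{i-1}$ by removing the mass in the (at most $s$) residue classes mod $d_i$ that appear in $\CA_{\mathrm{sm}}$. Track the total mass $m_i = \mu_i(\Z/Q\Z)$ and a distortion potential $D_i$ modeled on that of Balister et al. The central accounting is that removing $s$ residue classes mod $d_i$ inflates the single-step contributions to both the mass loss and the distortion increment by at most a factor of $s$ compared with the single-residue case; by linearity in the excluded residues, $m_{i-1}-m_i \leq s\,m_{i-1}/d_i$, and $D_i - D_{i-1}$ is bounded by $s$ times the single-residue increment appearing in Balister et al. Summing over the smooth moduli $d \geq q_0$ with standard smooth-number estimates, the final measure $\mu_*$ retains mass $\geq 1/2$ with small distortion, provided $y$ and $q_0$ are chosen in balance. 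A union bound handles $\CA_{\mathrm{rg}}$ --- each rough modulus exceeds $y$, so the total mass killed is $O(s\,|\CA_{\mathrm{rg}}|/y)$, which is $o(1)$ for a suitable choice of $y$ --- so that $\mu_*$ assigns positive mass to a residue uncovered by $\CA$, the desired contradiction. Optimizing the free parameter $y$ (with $\log y$ roughly of size $\log(s+1)/\log\log(s+2)$) in the resulting inequality then yields the stated bound on $\log q_0$.

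The principal obstacle is the distortion bookkeeping under the multi-residue updates. While linearity in the $s$ excluded residues gives a clean per-step upper bound of $s$ times the single-residue increment, one must verify that the \emph{cumulative} distortion does not accumulate super-linearly in $s$: cross-correlations between residues modulo different $d_i$'s could in principle blow up when summed over many steps. The main technical work is therefore to carry out the Parseval/orthogonality-type estimates underlying Balister et al.'s argument at this new level of generality, so that the total distortion is bounded by $s$ times the sum appearing in the distinct-moduli case and the optimization of $y$ delivers the exponent $\log^2(s+1)/\log\log(s+2)$.
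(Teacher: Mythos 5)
Your overall framework --- distortion-style measures on $\Z/Q\Z$, an $s$-fold loss per modulus because at most $s$ congruences share each modulus, and a smooth/rough splitting at a threshold $y$ --- is in the spirit of the paper, but there is a fatal gap in how you treat the rough moduli. You dispose of $\CA_{\mathrm{rg}}$ with a union bound, claiming the mass killed is $O(s|\CA_{\mathrm{rg}}|/y)$ and hence $o(1)$. Nothing bounds $|\CA_{\mathrm{rg}}|$ (or even $\sum_{d_i \text{ rough}} 1/d_i$) in terms of $s$ and $y$: a covering system may contain an enormous number of congruences whose moduli have a prime factor exceeding $y$, and their total density can be $\ge 1$ or arbitrarily large. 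Controlling exactly these moduli is the whole difficulty of the minimum-modulus problem; if a union bound sufficed there, Erd\H{o}s's question would have been easy. The paper handles them with the second-moment estimate $M_j^{(2)} \ll s^2(\log p_j)^6/p_j^2$ combined with the distortion parameters $\delta_j=1/2$ at the primes $p_j>y$ (via Theorem 3.1 of Balister et al.), so that the rough contribution is $\ll s^2(\log y)^5/y$, which is $<1/2$ once $y=Cs^3$. Note also that your proposed threshold, $\log y\asymp \log(s+1)/\log\log(s+2)$, is far too small for even that second-moment bound to be small; the paper needs $y\asymp s^3$.

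In fact the roles of the two regimes are essentially the reverse of what you propose. On the smooth side the paper uses no distortion at all ($\delta_j=0$ for $p_j\le y$, so the measure stays uniform there) and a plain first-moment bound $M_j^{(1)}\le s\sum_{d\ge d_1,\,P^+(d)=p_j}1/d$; summing over $p_j\le y$ this is at most $s\sum_{d>x,\,P^+(d)\le y}1/d$, which is $<1/2$ by a smooth-number estimate (the $u^{-u}$ decay) precisely because $d_1>x=\exp(c\log^2(s+1)/\log\log(s+2))$. So the ``cumulative distortion bookkeeping'' you flag as the main obstacle is not needed for the smooth moduli; what is needed --- and what your sketch defers rather than supplies --- is the per-progression bound $\P_{j-1}(a+g\Z)\le \prod_{p_i\mid g}(1-\delta_i)^{-1}/g$ (Lemma 3.4 of Balister et al.), which, together with the observation that for fixed modulus shape there are at most $s^k$ choices of indices, yields the two moment bounds; this is the only place the multiplicity enters, linearly in $M_j^{(1)}$ and quadratically in $M_j^{(2)}$. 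As written, your argument does not close: the rough-moduli step fails outright, and the distortion estimates you would need elsewhere are left unproven.
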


We will prove Theorem \ref{thm2} in the following section by a suitable modification of the distortion method.

\subsection*{Notation}
Given $n\in\N$, we write $P^+(n)$ for its largest prime factor with the convention that $P^+(1)=1$. In addition, we write $\omega(n)$ for the number of distinct prime factors of $n$. 

We adopt the usual asymptotic notation of Vinogradov: given two functions $f,g:X\to\R$ and a set $Y\subseteq X$, we write ``$f(x)\ll g(x)$ for all $x\in Y$'' if there is a constant $c=c(f,g,Y)>0$ such that $|f(x)|\le cg(x)$ for all $x\in Y$. The constant is absolute unless otherwise noted by the presence of a subscript. If $h:X\to\R$ is a third function, we use Landau's notation ``$f=g+O(h)$ on $Y$'' to mean that $|f-g|\ll h$ on $Y$. Typically the set $Y$ is clear from the context and so not stated explicitly.  Finally, if $f\ll g$ and $g\ll f$, we write $f\asymp g$.

\section{Covering systems of bounded multiplicity}\label{sec:proofs}

\subsection{The distortion method}

In this section we slightly modify some of the definitions in \cite{Balister} (see also \cite{Balister4}) to allow for covering systems of multiplicity $>1$.

We start with a finite set of congruences $\CA=\{a_1\mod{d_1},\dots,a_n\mod{d_n}\}$ with $1<d_1\le d_2\le \cdots\le d_n$. The goal is to show that if $d_1$ is large enough in terms of the multiplicity of $\CA$, then $\CA$ is not a covering system. 

Let $Q=[d_1,\dots,d_n]$, and let $p_1<p_2<\cdots<p_J$ be the distinct primes dividing $Q$. We may then write
\[
Q=\prod_{i=1}^J p_i^{\nu_i},
\]
where $\nu_i$ is the $p_i$-adic valuation of $Q$. For $j\in\{1,2,\dots,J\}$, define
\[
Q_j:=\prod_{i=1}^j p_i^{\nu_i}
\qquad\text{and}\qquad
\CB_j:= \bigcup_{\substack{1\le i\le n \\ P^+(d_i)=p_j }} \big\{ a\mod{Q} : a\equiv a_i\mod{d_i} \big\}.
\]

The most crucial definition is that of certain probability measures $\P_0,\P_1,\dots,\P_J$ on $\Z/Q\Z$, which we construct exactly as in \cite{Balister} in terms of some free parameters $\delta_1,\dots,\delta_J\in[0,1/2]$. 

First of all, we set up some notation. Let $\pi_j:\Z/Q\Z\to\Z/Q_j\Z$ be the natural projection for all $j\in\{0,1,\dots,J\}$, where $Q_0=1$. In addition, let
\[
F_j(x):= \{x'\in \Z/Q\Z: \pi_j(x')=\pi_j(x)\} ,
\]
so that $|F_j(x)|=Q/Q_j$. The measure $\P_j$ will be {\it $Q_j$-measurable} by construction, meaning that it will have the property that
\[
\P_j(x) = \P_j(x')\quad\text{whenever}\ \pi_j(x)=\pi_j(x'). 
\]

We are now ready to define the measures $\P_j$. We begin by letting $\P_0$ be the uniform measure on $\Z/Q\Z$. Next, consider $j\in\{1,\dots,J\}$ and suppose we have already constructed $\P_{j-1}$ to be $Q_{j-1}$-measurable. Let
\[
\alpha_j(x) := \frac{|F_{j-1}(x)\cap \CB_j|}{|F_{j-1}(x)|} 
\qquad\text{for all}\ x\in \Z/Q\Z,
\]
and note that $\alpha_j$ is a $Q_{j-1}$-measurable function, meaning that $\alpha_j(x')=\alpha_j(x)$ if $\pi_{j-1}(x')=\pi_{j-1}(x)$. We then define $\P_j$ on the congruence class $x\in\Z/Q\Z$ as follows:
\begin{itemize}
	\item If $\alpha_j(x)<\delta_j$, we let
	\[
	\P_j(x):= \P_{j-1}(x) \cdot \frac{1_{x\notin \CB_j}}{1-\alpha_j(x)} . 
	\]
	\item If $\alpha_j(x)\ge\delta_j$, we let
\[
\P_j(x):=\P_{j-1}(x) \cdot
	\begin{cases} 
		\ds	\frac{\alpha_j(x)-\delta_j}{\alpha_j(x)(1-\delta_j)}
			&\mbox{if $x\in \CB_j$},\\
			\\
		\ds \frac{1}{1-\delta_j} &\mbox{if $x\notin \CB_j$}.
	\end{cases}
\]
\end{itemize}
We may easily check that 
\eq{\label{eq:compatibility}
\P_j(F_{j-1}(x)) = \sum_{x'\in F_{j-1}(x)} \P_j(x')=\P_{j-1}(x)\cdot |F_{j-1}(x)| = \P_{j-1}(F_{j-1}(x)) 
}
for all $x\in\Z/Q\Z$. Since $\P_{j-1}$ is a probability measure on $\Z/Q\Z$, so is $\P_j$. In addition, $\P_j$ is $Q_j$-measurable by construction. We have thus completed the inductive step.

Having defined the measures $\P_j$, we introduce the notation 
\[
\E_j[f]:= \sum_{x\in \Z/Q\Z} f(x)\P_j(x)\quad\text{for all functions}\ f:\Z/Q\Z\to \C.
\]
We then define 
\[
M_j^{(1)}:=\E_{j-1}[\alpha_j] 
\quad\text{and}\quad
M_j^{(2)}:=\E_{j-1}[\alpha_j^2]
\]
for $j=1,\dots,J-1$. 

Given the above notation and definitions, the key result of the distortion method is the following.

\begin{lem}[Theorem 3.1 in \cite{Balister}]\label{lemma1} Assume the above notation. If
	\[
	\sum_{j=1}^J
	\min\bggg\{  M_j^{(1)},\frac{M_j^{(2)}}{4\delta_j(1-\delta_j)}\bggg\}<1 ,
	\]
	then $\CA$ does not cover the integers. 
\end{lem}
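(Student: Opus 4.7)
The plan is to show that under the hypothesis of the lemma, the total $\P_J$-mass of $\CB := \bigcup_{j=1}^J \CB_j$ is strictly less than $1$. Because $\P_J$ is a probability measure on $\Z/Q\Z$, any such deficit produces a residue class $x \bmod Q$ outside $\CB$, and any integer lifting $x$ avoids every congruence of $\CA$. A union bound then reduces the task to the pointwise estimate
\[
\P_J(\CB_j) \leq \min\bgggg\{M_j^{(1)},\frac{M_j^{(2)}}{4\delta_j(1-\delta_j)}\bgggg\}\qquad(1\le j\le J).
\]

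The first step is to replace $\P_J(\CB_j)$ by the easier quantity $\P_j(\CB_j)$. The set $\CB_j$ is a union of congruence classes modulo $Q_j$, hence a union of fibers $F_j(x)$. On the other hand, the compatibility identity \eqref{eq:compatibility} says that going from $\P_{k-1}$ to $\P_k$ preserves the mass of every $F_{k-1}$-fiber, so iterating this for $k=j+1,\dots,J$ gives $\P_J(S)=\P_j(S)$ for every $Q_j$-measurable set $S$. In particular $\P_J(\CB_j)=\P_j(\CB_j)$.

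Next I would decompose $\P_j(\CB_j)$ as a sum over the $F_{j-1}$-fibers. Fibers with $\alpha_j(x)<\delta_j$ contribute nothing, since the defining formula sets $\P_j\equiv 0$ on $\CB_j\cap F_{j-1}(x)$. On a fiber with $\alpha_j(x)\ge\delta_j$, a direct computation (using that $|F_{j-1}(x)\cap\CB_j|=\alpha_j(x)|F_{j-1}(x)|$ and that $\P_{j-1}$ is constant on the fiber) yields
\[
\P_j(\CB_j\cap F_{j-1}(x)) = \P_{j-1}(F_{j-1}(x))\cdot\frac{\alpha_j(x)-\delta_j}{1-\delta_j}.
\]
I would then apply two elementary inequalities valid for $0\le\delta\le\alpha\le 1$, the first trivial and the second equivalent to $(\alpha-2\delta)^2\ge 0$:
\[
\frac{\alpha-\delta}{1-\delta}\le\alpha \qquad\text{and}\qquad \frac{\alpha-\delta}{1-\delta}\le\frac{\alpha^2}{4\delta(1-\delta)}.
\]
Extending the sum back to all fibers (the added terms are nonnegative) and using that $\alpha_j$ is $Q_{j-1}$-measurable, the right-hand sides assemble to $\E_{j-1}[\alpha_j]=M_j^{(1)}$ and $\E_{j-1}[\alpha_j^2]/(4\delta_j(1-\delta_j))=M_j^{(2)}/(4\delta_j(1-\delta_j))$, respectively. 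Summing the resulting bound over $j$ and invoking the hypothesis then produces $\P_J(\CB)<1$, as required.

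The argument is essentially bookkeeping within the framework already set up in the excerpt, so I do not expect a serious obstacle. The one slightly delicate step, and the only place where a miscalculation could easily creep in, is the fiber-level computation in the large-$\alpha$ case, where the clean factor $(\alpha_j(x)-\delta_j)/(1-\delta_j)$ emerges precisely because the count $|\CB_j\cap F_{j-1}(x)|=\alpha_j(x)|F_{j-1}(x)|$ cancels the explicit $\alpha_j(x)$ appearing in the denominator of the piecewise definition of $\P_j$.
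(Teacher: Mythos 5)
Your argument is correct and complete: the reduction $\P_J(\CB_j)=\P_j(\CB_j)$ via \eqref{eq:compatibility}, the fiber-by-fiber computation giving the factor $(\alpha_j(x)-\delta_j)^+/(1-\delta_j)$, and the two elementary inequalities bounding it by $\min\{\alpha_j(x),\alpha_j(x)^2/(4\delta_j(1-\delta_j))\}$ are exactly the content of the distortion method. The paper itself offers no proof of this lemma (it is quoted as Theorem 3.1 of \cite{Balister}), and your reconstruction follows essentially the same route as the proof in that reference, so there is nothing genuinely different to compare.
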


\begin{rem*}
	As a matter of fact, Theorem 3.1 in \cite{Balister} also contains a second part about the density of the uncovered set $R$, but we do not need it here.
\end{rem*}

\subsection{Bounding the first and second moment}
We now proceed with bounding $M_j^{(1)}$ and $M_j^{(2)}$. Doing so is the context of Theorem 3.2 in \cite{Balister}, but this result is only valid for systems of congruences of multiplicity $1$. We thus need to generalize it. This is rather straightforward, and we describe how to do it below.

\begin{lem}\label{lem:B in fibers}
Assume the above notation. For $x\in\Z/Q\Z$ and $j\in\{1,2,\dots,J\}$, we have
\[
\alpha_j(x) \le \sum_{r=1}^{\nu_j} \sum_{g|Q_{j-1}}  \sum_{\substack{1\le i\le n \\ d_i=g p_j^r}} \frac{1_{x \subseteq a_i+g\Z}}{p_j^r}. 
\]
\end{lem}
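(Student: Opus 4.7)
The plan is a short union bound plus a Chinese Remainder Theorem (CRT) count.

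First I would apply a union bound to the definition of $\alpha_j(x)$. Since $\CB_j$ is the union over all $i$ with $P^+(d_i)=p_j$ of the lift of $a_i\mod{d_i}$ to $\Z/Q\Z$, one immediately obtains
\[
\alpha_j(x) \;\le\; \sum_{\substack{1\le i\le n\\ P^+(d_i)=p_j}} \frac{|F_{j-1}(x)\cap\{a\in\Z/Q\Z: a\equiv a_i\mod{d_i}\}|}{|F_{j-1}(x)|}.
\]
So it suffices to show that each summand on the right equals $p_j^{-r}\,\mathbf{1}_{x\subseteq a_i+g\Z}$ after writing $d_i=gp_j^r$.

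Next, for each $i$ appearing in the sum, the condition $P^+(d_i)=p_j$ together with $d_i\mid Q$ lets me factor uniquely $d_i=g\,p_j^r$ with $1\le r\le \nu_j$ and $g\mid \prod_{i<j}p_i^{\nu_i}=Q_{j-1}$. The congruence $a\equiv a_i\mod{d_i}$ then splits, by CRT, into $a\equiv a_i\mod g$ and $a\equiv a_i\mod{p_j^r}$. Because $g\mid Q_{j-1}$, the fiber condition $\pi_{j-1}(a)=\pi_{j-1}(x)$ forces $a\equiv x\mod g$, so the first congruence is either automatic (when $x\equiv a_i\mod g$, i.e.\ $x\subseteq a_i+g\Z$) or has no solution at all.

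In the non-vanishing case, the fiber $F_{j-1}(x)$ has size $Q/Q_{j-1}=p_j^{\nu_j}\prod_{k>j}p_k^{\nu_k}$, and via the CRT decomposition $\Z/Q\Z\cong \prod_k \Z/p_k^{\nu_k}\Z$ the fiber is free in the coordinates $k\ge j$. The extra condition $a\equiv a_i\mod{p_j^r}$ only touches the $p_j$-coordinate, killing a factor of $p_j^r$, so the ratio is exactly $p_j^{-r}$. Combining these two cases gives
\[
\frac{|F_{j-1}(x)\cap\{a: a\equiv a_i\mod{d_i}\}|}{|F_{j-1}(x)|}=\frac{\mathbf{1}_{x\subseteq a_i+g\Z}}{p_j^r}.
\]

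Finally I would regroup the outer sum by the value of $(g,r)$, producing the triple sum in the statement. There is no real obstacle here; the only thing to be careful about is the bookkeeping in the CRT step, in particular that $g$ and $p_j^r$ are coprime and that $g\mid Q_{j-1}$ so that the $g$-coordinate of $a$ is pinned down by the fiber. The union bound is lossy exactly when two classes in $\CB_j$ overlap, which is the price we knowingly pay (and which is harmless in the subsequent moment estimates).
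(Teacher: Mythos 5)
Your proposal is correct and follows essentially the same route as the paper: a union bound over the classes comprising $\CB_j$, the unique factorization $d_i=gp_j^r$ with $g\mid Q_{j-1}$, and a CRT/compatibility count showing each summand contributes $p_j^{-r}\,\mathbf{1}_{x\subseteq a_i+g\Z}$. No gaps; your computation of the fiber intersection is exactly the paper's counting of the $Q/(Q_{j-1}p_j^r)$ admissible residues $a\bmod Q$.
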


\begin{proof} Note that $|F_{j-1}(x)|=Q/Q_{j-1}$ and that we may write $x=c+Q\Z$ for some $c\in\Z$. Hence,
	\[
	\alpha_j(x) =\frac{|F_{j-1}(x)\cap \CB_j|}{Q/Q_{j-1}}  \le \frac{Q_{j-1}}{Q} \sum_{\substack{1\le i\le n \\ P^+(d_i)=p_j}} \sum_{\substack{a\mod Q \\ a\equiv c\mod{Q_{j-1}} \\ a\equiv a_i\mod{d_i}}}1 ,
	\]
	by the union bound. For each $i$ with $P^+(d_i)=p_j$ we may write uniquely $d_i=gp_j^r$ with $g|Q_{j-1}$ and $1\le r\le\nu_j$. We thus find that
\[ 
\alpha_j(x) \le \frac{Q_{j-1}}{Q_j} \sum_{r=1}^{\nu_j} \sum_{g|Q_{j-1}}  \sum_{\substack{1\le i\le n \\ d_i=g p_j^r}}\sum_{\substack{a\mod Q \\ a\equiv c\mod{Q_{j-1}} \\ a\equiv a_i\mod{d_i}}}1.
\]
For the congruences $a\equiv a_i\mod{d_i}$ and $a\equiv c\mod{Q_{j-1}}$ to be compatible, we must  have $c\equiv a_i\mod{d_i}$ or, equivalently, that $x$ is a subset of $a_i+g\Z$. Under this assumption, $a$ lies in some congruence class mod $Q_{j-1}p_j^r$, so there are $Q/(Q_{j-1}p_j^r)$ choices for $a\mod Q$. This completes the proof of the lemma.
\end{proof}

\begin{lem}\label{lem:moments} 
	Assume the above notation, let $s=m(\CA)$, and let $j\in\{1,2,\dots,J\}$. 
	\begin{enumerate}
	\item If $\delta_i=0$ for $i\in\{1,\dots,j-1\}$, then 
	\[
	M_j^{(1)} \le  s \sum_{\substack{d\ge d_1 \\ P^+(d)=p_j}} \frac{1}{d} .
	\]
	\item We have
	\[
	M_j^{(2)} \ll \frac{s^2(\log p)^6}{p^2}.
	\]
		\end{enumerate}
\end{lem}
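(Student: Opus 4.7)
For part (a), the key simplification is that the hypothesis $\delta_i = 0$ for $i<j$ collapses $\P_{j-1}$ to the uniform measure. Indeed, when $\delta_i = 0$ the first case in the recursive definition of $\P_i$ ($\alpha_i(x) < \delta_i$) is vacuous since $\alpha_i\ge 0$, while the second case gives $\P_i(x) = \P_{i-1}(x)$ both for $x\in\CB_i$ (the multiplier is $\alpha_i(x)/\alpha_i(x) = 1$, using $\alpha_i(x)>0$) and for $x\notin\CB_i$ (the multiplier is $1/(1-0)=1$). Iterating, $\P_{j-1} = \P_0$ is the uniform measure on $\Z/Q\Z$. Applying Lemma \ref{lem:B in fibers}, exchanging expectation with the sum, and using $\P_0(a_i+g\Z) = 1/g$ (valid since $g\mid Q$), we get
\[
M_j^{(1)} \le \sum_{r=1}^{\nu_j}\sum_{g\mid Q_{j-1}}\sum_{i:\,d_i=gp_j^r}\frac{1}{gp_j^r} = \sum_{i:\,P^+(d_i)=p_j}\frac{1}{d_i}.
\]
Grouping indices by the value of $d_i$ (with at most $s$ indices per value, by the multiplicity hypothesis) and noting $d_i\ge d_1$ gives the stated bound.

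For part (b), I follow the proof of Theorem 3.2 in Balister et al.\ \cite{Balister} and track where the multiplicity enters. Squaring the bound from Lemma \ref{lem:B in fibers} and taking $\E_{j-1}$ yields
\[
M_j^{(2)} \le \sum_{r_1,g_1,i_1}\sum_{r_2,g_2,i_2}\frac{\P_{j-1}\bigl((a_{i_1}+g_1\Z)\cap(a_{i_2}+g_2\Z)\bigr)}{p_j^{r_1+r_2}}.
\]
Each intersection is either empty or a coset of $L\Z$ with $L=\mathrm{lcm}(g_1,g_2)\mid Q_{j-1}$, and the probability of such a coset is controlled by the same $Q_{j-1}$-measurable structural estimate used in \cite{Balister}: it relies only on the recursive definition of $\P_j$ and the range $\delta_i\in[0,1/2]$, and is indifferent to any property of $\CA$ beyond the arithmetic of $Q_{j-1}$. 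The unique place the multiplicity-one hypothesis intervenes in their argument is the counting step ``at most one index $i$ per value $d$''; replacing this by ``at most $s$ indices per value'' pulls out an overall $s^2$ factor from the double sum over $(i_1,i_2)$. The remaining sum over pairs of distinct values $(d_{i_1},d_{i_2})$ with $P^+(d_{i_k})=p_j$ is exactly the one estimated in \cite{Balister} and produces the $O((\log p_j)^6/p_j^2)$ contribution.

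The anticipated main obstacle is ensuring that the pointwise bound on $\P_{j-1}(c+L\Z)$ used in Balister et al.\ is formulated with enough generality to accommodate $s\ge 2$; inspection of their construction confirms that this bound depends only on the measure-theoretic machinery (the parameters $\delta_i$ and the divisibility of $L$ into $Q_{j-1}$) and not on combinatorial features of $\CA$, so the substitution is clean and the $s^2$ factor appears as the only change in the final estimate.
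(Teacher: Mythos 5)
Your proof is correct and follows essentially the same route as the paper: both parts are run through Lemma \ref{lem:B in fibers}, the multiplicity enters only as a factor of $s^k$ in the count of indices per modulus value, and the second-moment bound reduces to the same divisor sum as in Balister et al.\ (which the paper just re-derives explicitly via $2^{\omega([g_1,g_2])}$ and Mertens to get $(\log p)^6/p^2$). The only cosmetic difference is in part (a): you observe directly that $\delta_i=0$ for $i<j$ forces $\P_{j-1}$ to be the uniform measure, whereas the paper invokes the general estimate $\P_{j-1}(a+g\Z)\le \prod_{p_i\mid g}(1-\delta_i)^{-1}/g$ and notes the product is $1$; the two observations are equivalent here.
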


\begin{proof} We treat both parts simultaneously for now. Let $k\in\{1,2\}$ and let us write $p=p_j$ for simplicity. By Lemma \ref{lem:B in fibers}, we have 
	\[
	\E_{j-1}[\alpha_j^k] \le \sum_{1\le r_1,\dots,r_k\le \nu_j}\sum_{g_1,\dots,g_k | Q_{j-1} }  \sum_{\substack{1\le i_1,\dots,i_k \le n \\ d_{i_\ell}=g_\ell p^{r_\ell} \ \forall \ell }} \frac{\P_{j-1}\big( \bigcap_{\ell=1}^k (a_{i_\ell}+g_\ell\Z)\big)}{p^{r_1+\cdots+r_k}} .
	\]
	Since $d_i\ge d_1$ for all $i$, we must have $g_\ell p^{r_\ell}\ge d_1$ for all $\ell$. Given $r_1,\dots,r_k$ and $g_1,\dots,g_k$, there are at most $s^k$ choices for $i_1,\dots,i_k$ with $d_{i_\ell}=g_\ell p^{r_\ell}$ (because we have assumed that $\CA$ has multiplicity $s$). For each such choice of $i_1,\dots,i_k$, the Chinese Remainder Theorem implies that the set $\bigcap_{\ell=1}^k (a_{i_\ell}+g_\ell\Z)$ is either empty, or an arithmetic progression with modulus $[g_1,\dots,g_k]$. Hence, Lemma 3.4 in \cite{Balister} implies that
	\eq{\label{eq:AP bound}
	\P_{j-1}\bigg( \bigcap_{\ell=1}^k (a_{i_\ell}+g_\ell\Z)\bigg)
		\le \frac{\prod_{p_i|[g_1,\dots,g_k]}(1-\delta_i)^{-1}}{[g_1,\dots,g_k]},
	}
for each of the $\le s^k$ possible values of $i_1,\dots,i_k$. We thus conclude that
	\[
	\E_{j-1}[\alpha_j^k] \le s^k \sum_{1\le r_1,\dots,r_k\le \nu_j}\sum_{\substack{g_1,\dots,g_k | Q_{j-1} \\  g_\ell p^{r_\ell}\ge d_1\ \forall\ell}}  \frac{\prod_{p_i|[g_1,\dots,g_k]}(1-\delta_i)^{-1}} {[g_1,\dots,g_k]p^{r_1+\cdots+r_k}}  .
	\]
	When $k=1$ and $\delta_i=0$ for all $i<j$, this readily proves part (a) of the lemma. 
	
	Now, let us consider the case when $k=2$ and prove part (b). Here, there are no conditions on the parameters $\delta_i$ except for knowing that $\delta_i\in[0,1/2]$ for all $i$. In particular,  $\prod_{p_i|[g_1,g_2]}(1-\delta_i)^{-1}\le 2^{\omega([g_1,g_2])}$. Therefore,
	\als{
	\E_{j-1}[\alpha_j^2] &\le s^2 \sum_{1\le r_1,r_2\le \nu_j}\sum_{g_1,g_2 | Q_{j-1} }  \frac{2^{\omega([g_1,g_2])}}{[g_1,g_2]p^{r_1+r_2}} 
	\le \frac{s^2}{(p-1)^2}\sum_{g_1,g_2 | Q_{j-1} }  \frac{2^{\omega([g_1,g_2])}}{[g_1,g_2]} .
	}
	The function $\N\ni m\to \#\{(g_1,g_2)\in\N^2: [g_1,g_2]=m\}$ is multiplicative and takes the value $2\nu+1$ on each $\nu$-th prime power. Hence,
	\[
	\sum_{g_1,g_2 | Q_{j-1} }  \frac{2^{\omega([g_1,g_2])}}{[g_1,g_2]} 
		= \prod_{i<j} \bigg(1+\frac{6}{p_i}+O\bigg(\frac{1}{p_i^2}\bigg) \bigg) 
		\le \exp\bigg\{\sum_{i<j} \frac{6}{p_i}+ O\bigg(\frac{1}{p_i^2}\bigg)\bigg\}  \ll (\log p)^6
	\]
	by  the inequality $1+t\le e^t$ and Mertens' estimate \cite[Theorem 3.4(b)]{Koukoulopoulos}. This completes the proof of part (b) of the lemma too.
\end{proof}

\subsection{Proof of Theorem \ref{thm2}}
It remains to prove Theorem \ref{thm2}. We will need the following simple consequence of Theorem 16.3 in \cite{Koukoulopoulos}:

\begin{lem}\label{lem:smooth numbers}
Let $x\ge y\ge2$ be such that $y\ge (\log x)^3$, and let $u=\log x/\log y$. Then we have that
	\[
	\sum_{\substack{d>y^u \\ P^+(d)\le y}} \frac{1}{d} \ll \frac{\log y}{u^u} .
	\]
\end{lem}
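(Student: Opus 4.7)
The plan is to reduce the tail sum to a bound on the smooth-number counting function $\Psi(t,y) := \#\{n \le t : P^+(n) \le y\}$ via partial summation, and then invoke the cited Theorem 16.3 of \cite{Koukoulopoulos}, which provides the standard bound $\Psi(t,y) \ll t \rho(\log t/\log y)$ (where $\rho$ denotes the Dickman function) in a range of $y$ comfortably implied by our hypothesis $y \ge (\log x)^3$.

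Concretely, Abel summation rewrites the tail sum as
\[
\sum_{\substack{d > y^u \\ P^+(d) \le y}} \frac{1}{d} = -\frac{\Psi(y^u,y)}{y^u} + \int_{y^u}^\infty \frac{\Psi(t,y)}{t^2}\,dt,
\]
using that $\Psi(t,y)/t \to 0$ as $t\to\infty$. Inserting the bound from Theorem 16.3 and changing variables via $v = \log t/\log y$, this becomes $\ll \rho(u) + \log y \cdot \int_u^\infty \rho(v)\,dv$.

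The next step is to control $\int_u^\infty \rho(v)\,dv$ by $\rho(u)$ itself. This follows from the decay of $\rho$: the defining relation $v\rho(v) = \int_{v-1}^v \rho(t)\,dt \le \rho(v-1)$ valid for $v \ge 1$ (using the monotonicity of $\rho$) gives by iteration $\rho(u+k) \le \rho(u)/\prod_{j=1}^{k}(u+j)$, and summing this over $k\geq 0$ shows $\int_u^\infty \rho(v)\,dv \ll \rho(u)$ uniformly for $u \ge 1$. Combined with the well-known uniform bound $\rho(u) \ll 1/u^u$ for $u \ge 1$ (a consequence of the asymptotic $\rho(u) = u^{-u(1+o(1))}$ together with boundedness of $\rho(u) u^u$ on any finite interval $[1,u_0]$), we conclude that the tail sum is $\ll \log y/u^u$. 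The degenerate regime $u < 1$ is handled separately by the trivial estimate $\sum_{P^+(d)\le y} 1/d \asymp \log y$ from Mertens' theorem, which already matches the claimed bound since $u^u \asymp 1$ there.

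No serious obstacle arises: the argument is a direct deduction from the cited smooth-number bound and standard facts about the Dickman function. The hypothesis $y \ge (\log x)^3$ enters only to place us in the range of validity of Theorem 16.3 (ensuring in particular that the implicit $o(1)$ in the asymptotic for $\rho$ does not wreck the uniform comparison between $\rho(u)$ and $1/u^u$).
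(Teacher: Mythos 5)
The paper never prints a proof of this lemma (it is dispatched as a ``simple consequence of Theorem 16.3 in \cite{Koukoulopoulos}''), so there is no detailed argument of the authors to compare yours against; your skeleton --- partial summation to reduce to $\Psi(t,y)$, then Dickman-function facts --- is surely the intended kind of derivation, and your $\rho$-manipulations are essentially fine: $v\rho(v)=\int_{v-1}^{v}\rho(t)\,\dee t$ does give $\int_u^{\infty}\rho(v)\,\dee v\ll\rho(u)$ for $u\ge1$. One small imprecision: $\rho(u)\ll u^{-u}$ does \emph{not} follow from the weak form $\rho(u)=u^{-u(1+o(1))}$ that you quote (that form is compatible with $\rho(u)u^{u}\to\infty$); you need the sharper asymptotic $\rho(u)=\bigl(e^{o(1)}/(u\log u)\bigr)^{u}$, which is what makes $\rho(u)\le u^{-u}$ true for large $u$.

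The genuine gap is the step ``inserting the bound from Theorem 16.3'' over the whole range $t\in(y^{u},\infty)$. No estimate of the form $\Psi(t,y)\ll t\rho(\log t/\log y)$ can hold uniformly in $t$ for fixed $y$: once $v=\log t/\log y$ exceeds a constant multiple of $y$ one has $t\rho(v)<1\le\Psi(t,y)$, so whatever the precise statement of Theorem 16.3, it carries a hypothesis tying $y$ to $t$ (of the shape $y\ge(\log t)^{\kappa}$), and your integral leaves its range of validity as soon as $t>\exp(y^{1/\kappa})$. Hence the integral must be truncated at the edge of that range and the remaining tail estimated by a different device (e.g.\ Rankin's trick), and this is not a formality: in the extremal case $y\asymp(\log x)^{3}$, i.e.\ $u\asymp y^{1/3}/\log y$, the target $\log y\cdot u^{-u}$ has size $\exp\{-(\tfrac13+o(1))y^{1/3}\}$, whereas the easy uniform bounds (Rankin with exponent $1-1/\log y$, or $\Psi(t,y)\ll t e^{-cv}$) applied from a cutoff at $v\approx y^{1/3}/\log y$ only give about $\exp\{-cy^{1/3}/\log y\}$, which is far too large. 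The argument therefore has to splice a $\rho$-quality (i.e.\ $v^{-v}$-type) bound on a range reaching somewhat beyond $v=y^{1/3}/\log y$ --- this is exactly the slack between the lemma's exponent $3$ and the wider range in which the smooth-number estimate is available --- and then use a crude bound only past that point, checking the crossover still beats $u^{u}$. None of this appears in your write-up; indeed your closing remark that $y\ge(\log x)^{3}$ ``enters only'' to validate the theorem at $t=x$ is precisely where the oversight shows.
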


Now, let us complete the proof of Theorem \ref{thm2}. In the notation of Section \ref{sec:proofs}, we must show that if $d_1>\exp(c\log^2(s+1)/\log\log(s+2))$, then $\CA$ that does not cover $\Z$. In view of Lemma \ref{lemma1}, it suffices to show that
\[
\eta:=\sum_{1\le j\le J} \min\bigg\{ M_j^{(1)}, \frac{M_j^{(2)}}{4\delta_j(1-\delta_j)}\bigg\} <1. 
\]
Let $y=Cs^3$, where $C$ is a constant that will be chosen to be large enough, and let $k=\max\{j\in[1,J]\cap\Z: p_j\le y\}$. We set $\delta_i=0$ for $i\le k$ and $\delta_i=1/2$ for $i>k$, so that
\[
\eta\le \sum_{1\le j\le k} M_j^{(1)} + \sum_{k<j\le J} M_j^{(2)} =:\eta_1+\eta_2. 
\]
Then, Lemma \ref{lem:moments}(b) and Chebyshev's estimate \cite[Theorem 2.4]{Koukoulopoulos} imply that
\[
\eta_2 \ll \sum_{p>y} \frac{s^2(\log p)^6}{p^2} \asymp \frac{s^2(\log y)^5}{y} .
\]
If $C$ is large enough, then $\eta_2<1/2$. From now on, we fix such a choice of $C$.

It remains to bound $\eta_1$. Applying Lemma \ref{lem:moments}(a) and our assumption that 
\[
d_1>x:=\exp\{c\log^2(s+1)/\log\log(s+2)\},
\]
we find that
\[
\eta_1\le s  \sum_{\substack{d>x \\ P^+(d)\le y}} \frac{1}{d} .
\]
If $c$ is large enough compared to $C$ (which we have already fixed), then Lemma \ref{lem:smooth numbers}, applied with 
\[
u= \frac{\log x}{\log y} = \frac{c\log^2(s+1)}{\log(Cs^3)\log\log(s+2)} \sim \frac{c\log s}{\log\log s} \quad\text{when}\ s\to\infty, 
\]
implies that the sum over $d$ is $<1/(2s)$. Hence, $\eta_1<1/2$, and thus $\eta\le \eta_1+\eta_2<1$, as needed. This shows that if $d_1>x$, then $\CA$ does not cover $\Z$, thus completing the proof of Theorem \ref{thm2}. 

\subsection*{Acknowledgment}  The authors would like to thank Michael Filaseta for a helpful conversation about paper \cite{CFT}. They would also like to thank the referee of the paper for their comments and suggestions.

\subsection*{Funding} This project started during a 2020 summer internship of JK and SL, who were both funded by Undergraduate Summer Research Awards of the Natural Sciences and Engineering Research Council of Canada. 

JK is supported by a fellowship of the Fonds de recherche du Qu\'ebec - Nature et technologies.

DK is supported by the Courtois Chair II in fundamental research, by the Natural Sciences and Engineering Research Council of Canada (RGPIN-2018-05699) and by the Fonds de recherche du Qu\'ebec - Nature et technologies (2022-PR-300951).

SL is supported by a fellowship funded by the Courtois Chair II in fundamental research.

\bibliographystyle{plain}

\end{document}